\documentclass[a4paper,12pt]{amsart}

\pdfoutput=1

\usepackage[T1]{fontenc}
\usepackage{lmodern}
\usepackage{microtype}

\usepackage[a4paper,portrait]{geometry}
\usepackage{amssymb,amsmath}
\usepackage{mathtools}
\usepackage{verbatim} 
\usepackage{cite} 

\usepackage[english]{babel} 
\usepackage{csquotes} 

\usepackage{tikz}
\usetikzlibrary{patterns}

\usepackage[]{xcolor}
\usepackage[colorlinks=true, citecolor=blue, urlcolor=violet, breaklinks=true]{hyperref}
\usepackage{cleveref}

\newtheorem{Theorem}{Theorem}[section]
\newtheorem{Proposition}[Theorem]{Proposition}
\newtheorem{Lemma}[Theorem]{Lemma}

\newtheorem{Problem}[Theorem]{Problem}

\newtheorem{Question}[Theorem]{Question}
\theoremstyle{definition}
\newtheorem{Definition}[Theorem]{Definition}
\newtheorem{Example}[Theorem]{Example}


\DeclareMathOperator{\conv}{conv}
\DeclareMathOperator{\aff}{aff}

\newcommand{\ZZ}{\mathbb{Z}}
\newcommand{\RR}{\mathbb{R}}

\newcommand{\set}[1]{\{#1\}}
\newcommand{\with}{\ \vrule\ }

\begin{document} 

\title[Smooth centrally symmetric polytopes in dimension 3]{Smooth centrally symmetric polytopes in dimension 3 are IDP}

\author[Beck]{Matthias Beck}
\address{Department of Mathematics\\
         San Francisco State University\\
         San Francisco, CA 94132\\
         USA}
\email{mattbeck@sfsu.edu}

\author[Haase]{Christian Haase}
\address{Mathematik, Freie Universit\"at Berlin, 14195 Berlin, Germany}
\email{haase@math.fu-berlin.de}

\author[Higashitani]{Akihiro Higashitani}
\address{Department of Mathematics, Kyoto Sangyo University, Motoyama, Kamigamo, Kita-Ku, Kyoto, Japan, 603-8555}
\email{ahigashi@cc.kyoto-su.ac.jp}

\author[Hofscheier]{Johannes Hofscheier}
\address{Department of Mathematics and Statistics, McMaster University, 1280 Main Street West, Hamilton, Ontario L8S4K1, Canada}
\email{johannes.hofscheier@math.mcmaster.ca}

\author[Jochemko]{Katharina Jochemko}
\address{Department of Mathematics, Royal Institute of Technology, SE-100 44 Stockholm, Sweden}
\email{jochemko@kth.se}

\author[Katth\"an]{Lukas Katth\"an}
\address{School of Mathematics, University of Minnesota, Minneapolis, MN 55455, USA}
\email{katth001@umn.edu}

\author[Micha\l{}ek]{Mateusz Micha\l{}ek}
\address{Max Planck Institute for Mathematics in the Sciences, Inselstrasse 22, 04103 Leipzig, Germany}
\address{
Mathematical Institute of the Polish Academy of Sciences, {\'S}niadeckich 8, 00656 Warszawa, Poland}
\email{mateusz.michalek@mis.mpg.de}

\keywords{smooth lattice polytopes, integer decomposition property, Oda's conjecture, central symmetry, $3$-dimensional polytopes.}

\subjclass[2010]{Primary: 52B20; Secondary: 52B10, 52B12.}

\date{2 July 2018}

\begin{abstract}
  In 1997 Oda conjectured that every smooth lattice polytope has the integer decomposition property. We prove Oda's conjecture for centrally symmetric $3$-dimensional polytopes, by showing they are covered by lattice parallelepipeds and unimodular simplices.
\end{abstract}

\maketitle

\section{Introduction} 
A \textbf{lattice polytope} in $\RR^d$ is the convex hull of finitely many points in the integer lattice $\ZZ^d$. All polytopes in this paper will be assumed to be lattice polytopes. They appear naturally in a variety of different fields, such as combinatorics, commutative algebra, toric geometry and optimization, where their geometric and arithmetic behavior has been intensively studied in recent decades. In \cite{Oda}, Oda posed the following fundamental problem:
\begin{Problem}
  \label{prob:orig-oda}
  Given two lattice polytopes $P,Q \subseteq \RR^d$, when can every lattice point $p$ in the Minkowski sum $P + Q:=\{x+y\colon x \in P, y \in Q\}$ be written as the sum of two lattice points $p_1 \in P$ and $p_2 \in Q$, i.e., $p = p_1 + p_2$?
\end{Problem}
In general, for arbitrary lattice polytopes, not every lattice point in $P+Q$ is the sum of a lattice point in $P$ and a lattice point in $Q$, not even in the special case $P=Q$. For example, let $P$ be the convex hull of $(0,0,0),(1,0,0),(0,0,1),(1,2,1)$ and consider $2P$. Then $2P$ contains the lattice point $(1,1,1)$ but this cannot be written as the sum of any two lattice points in $P$. Of particular interest in this context are so-called \textit{IDP polytopes} -- a lattice polytope has the \textbf{Integer Decomposition Property} (or is \textbf{IDP} for short) if for every integer $n\geq 1$ and every lattice point $p\in nP\cap \mathbb{Z}^d$ there are lattice points $p_1,\ldots, p_n\in P\cap \mathbb{Z}^d$ such that $p=p_1+\cdots +p_n$.  IDP polytopes are of great interest when studying the arithmetic behavior of dilated polytopes (\textit{Ehrhart theory}) as well as in commutative algebra and toric geometry. The following basic fact will play a crucial role in this note: 
\begin{Proposition}[See, e.g., \cite{BGpaper}]
  Unimodular simplices, parallelepipeds, and zonotopes are IDP. 
\end{Proposition}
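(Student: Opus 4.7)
My plan is to handle the three classes of polytopes separately, in each case producing an explicit decomposition of a lattice point in the $n$-th dilate as a sum of $n$ lattice points of the polytope itself. The \emph{unimodular simplex} case is essentially immediate: if $\Delta=\conv(v_0,\ldots,v_d)$ is unimodular, the edge vectors $v_1-v_0,\ldots,v_d-v_0$ extend to a $\ZZ$-basis of the ambient lattice, so the lattice points of $n\Delta$ are precisely the affine combinations $\sum_{i=0}^d a_i v_i$ with $a_i\in\ZZ_{\geq 0}$ and $\sum a_i=n$; any such expression is tautologically a sum of $n$ vertices of $\Delta$.

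For a lattice \emph{parallelepiped}, I would translate the base vertex to the origin so that $P=\sum_{i=1}^d [0,1]\,w_i$ for linearly independent $w_i\in\ZZ^d$, and then use that the half-open parallelepiped $Q:=\sum_{i=1}^d [0,1)\,w_i$ is a fundamental domain for the sublattice $L:=\sum_i\ZZ w_i\subseteq\ZZ^d$. The translates $Q+\sum_i k_i w_i$ for $k_i\in\{0,\ldots,n-1\}$ tile the half-open dilate of $nP$, so every $p\in nP\cap\ZZ^d$ admits (after absorbing any boundary case $k_i=n$ into $q$ by replacing $q$ with $q+w_i$) a representation $p=q+\sum_i k_i w_i$ with $q\in P\cap\ZZ^d$ and $k_i\in\{0,\ldots,n-1\}$. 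Setting $y_j:=\sum_{i\colon k_i\geq j} w_i$ for $j=1,\ldots,n-1$, each $y_j$ is a vertex of $P$ (its coefficients lie in $\{0,1\}$), and telescoping yields $p=q+y_1+\cdots+y_{n-1}$, the required decomposition.

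For a \emph{zonotope} $Z=\sum_{i=1}^m [0,w_i]$, I would reduce to the parallelepiped case via Shephard's zonotopal tiling theorem, which dissects $Z$ as a union of lattice translates $t+P_\sigma$ of parallelepipeds $P_\sigma=\sum_{i\in\sigma}[0,w_i]$ indexed by linearly independent $\sigma\subseteq\{1,\ldots,m\}$, each translation $t$ being a $0/1$-combination of the $w_i$'s. The same dissection scales to $nZ$. Given $p\in nZ\cap\ZZ^d$, I would locate a tile $t+nP_\sigma$ containing $p$, apply the parallelepiped case to $p-t\in nP_\sigma$ to obtain lattice points $q_1,\ldots,q_n\in P_\sigma\cap\ZZ^d$ summing to $p-t$, and redistribute via $p=(t+q_1)+q_2+\cdots+q_n$ with $t+q_1\in t+P_\sigma\subseteq Z$. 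The main obstacle is the bookkeeping in this last step: verifying that Shephard's dissection can be chosen with lattice-valued translations and that the redistribution of $t$ keeps all summands in $Z$ itself rather than merely in some Minkowski summand.
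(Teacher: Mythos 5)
The paper gives no proof of this Proposition at all (it is quoted from Bruns--Gubeladze \cite{BGpaper}), so your argument stands on its own. Your first two cases are correct and complete: for a unimodular simplex the barycentric coordinates of a lattice point of $n\Delta$ are forced to be nonnegative integers summing to $n$, and for a parallelepiped your telescoping decomposition $p=q+y_1+\cdots+y_{n-1}$ with $y_j=\sum_{i\colon k_i\ge j}w_i$ is exactly the standard argument. Also, the point you asked about concerning lattice-valued translations is fine: $Z$ does admit a tiling into pieces $t+P_\sigma$ with $t$ a $0/1$-combination of the generators outside $\sigma$ (this follows, e.g., from fine zonotopal tilings via the Bohne--Dress theorem \cite{Santos}).

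The zonotope step, however, goes wrong at precisely the point you flagged. Dilating that tiling of $Z$ by $n$ yields tiles of the form $nt+nP_\sigma$, not $t+nP_\sigma$, and the sets $t+nP_\sigma$ in general do \emph{not} cover $nZ$: for the hexagon $Z=[0,e_1]+[0,e_2]+[0,e_1+e_2]$ each $t$ is a single generator or $0$, each $P_\sigma$ contains no point with both coordinates $\ge 3/2$, so the vertex $(2n,2n)$ of $nZ$ lies in no $t+nP_\sigma$ once $n\ge 2$. Consequently your redistribution, which adds $t$ to only one summand, is calibrated to the wrong tiles. The repair is immediate and stays within your approach: locate $p$ in a scaled tile $nt+nP_\sigma$, apply your parallelepiped case to $p-nt\in nP_\sigma$ to get $q_1,\ldots,q_n\in P_\sigma\cap\ZZ^d$, and output $p=\sum_{i=1}^{n}(t+q_i)$; every summand lies in the original tile $t+P_\sigma\subseteq Z$, so no Minkowski-summand issue arises. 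Even more economically, once $Z$ is written as a finite union of lattice translates of lattice parallelepipeds, \Cref{lem:IDP-cover} (which holds because dilation by $n$ commutes with unions) combined with your parallelepiped case finishes the zonotope case with no redistribution at all.
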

A natural notion in toric geometry is that of a smooth polytope: a lattice polytope $P$ is \textbf{smooth} if it is simple and if its primitive edge directions at every vertex form a basis of the lattice $(\aff P) \cap \mathbb{Z}^d$. In particular, every face of a smooth lattice polytope is itself smooth.

Due to its relation with projective normality of projective toric varieties, the following
specialization of \Cref{prob:orig-oda} was also asked by Oda~\cite{Oda}. It has since become known as \textit{Oda's Conjecture}.
\begin{Problem}[Oda's Conjecture]\label{prob:oda}
  Is every smooth lattice polytope IDP?
\end{Problem}

The purpose of this note is to prove the following case of Oda's conjecture.
\begin{Theorem}\label{thm:main}
  Every centrally symmetric $3$-dimensional smooth polytope is IDP.
\end{Theorem}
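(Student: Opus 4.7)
The plan is to exhibit a covering of $P$ by subpolytopes each of which is a unimodular simplex or a lattice parallelepiped, and then invoke the elementary observation that if $P = \bigcup_i Q_i$ with $Q_i \subseteq P$ all IDP, then $P$ is IDP: for any $p \in nP \cap \ZZ^3$, we have $nP = \bigcup_i nQ_i$, so $p \in nQ_i$ for some $i$, and the IDP of $Q_i$ yields $p = p_1 + \cdots + p_n$ with $p_j \in Q_i \cap \ZZ^3 \subseteq P \cap \ZZ^3$. The pieces we are allowed to use are already known to be IDP.

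First, at every vertex $v$ of $P$, smoothness supplies primitive edge directions $e_1^v, e_2^v, e_3^v$ forming a $\ZZ$-basis of $\ZZ^3$. Associated to $v$ are two natural candidate pieces: the unimodular parallelepiped $\Pi_v := v + \sum_i [0,1]\,e_i^v$ and the unimodular tetrahedron $\Sigma_v := \conv\set{v,\, v+e_1^v,\, v+e_2^v,\, v+e_3^v}$. Whenever one of these lies in $P$, it qualifies as a piece of the intended cover; more generally, larger lattice parallelepipeds spanned by actual edges of $P$ emanating from $v$ are also admissible.

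Second, I would exploit central symmetry. After translating the center of $P$ to the origin, $P = -P$, so vertices, edges, and facets come in $\pm$-pairs, and the primitive edge directions at $-v$ are the negatives of those at $v$. Combined with the fact that every facet of $P$ is itself a smooth lattice polygon -- and hence of a very restricted combinatorial type -- this tight symmetry should force the combinatorial type of $P$ into a short list of families, organized by the facet complex modulo the $\pm 1$-action (facets being unimodular squares, triangles, or specific trapezoids, glued consistently). A case-by-case analysis of these families follows.

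The main obstacle I anticipate is the third step: for each family, building the explicit covering. The naive union $\bigcup_v \Pi_v$ fails whenever $\Pi_v$ pokes outside $P$ (e.g.\ if some opposite facet cuts through it) or, conversely, does not reach far enough to cover all interior lattice regions. One must then either substitute a larger parallelepiped spanned by pairs of parallel edges connecting $v$ to $-v$ (using central symmetry), or fill the residual pockets with unimodular simplices, and then carefully verify that every chosen piece lies inside $P$ and that the pieces together exhaust $P$ with no gaps. It is this geometric accounting, rather than the structural classification, where I expect the real work to be.
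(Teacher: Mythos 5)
Your first step---reducing IDP to a covering of $P$ by lattice parallelepipeds and unimodular simplices---is exactly the paper's starting point (\Cref{lem:IDP-cover}), but beyond that the proposal is a plan rather than a proof: the covering itself, which is the entire content of the theorem, is explicitly deferred to ``geometric accounting'' that you do not carry out. Worse, the route you propose for organizing that work is not viable. The facets of a smooth $3$-polytope are smooth polygons, and these are \emph{not} confined to unimodular triangles, squares and trapezoids: smooth polygons can have arbitrarily many edges, and correspondingly centrally symmetric smooth $3$-polytopes do not fall into a short list of combinatorial families---for instance, repeatedly chiseling antipodal vertex pairs of $n[-1,1]^3$ (as in the paper's example following the main proof), or taking products of centrally symmetric smooth polygons with a segment, produces infinitely many combinatorial types. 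So there is no finite case analysis to run, and the vertex pieces $\Pi_v$, $\Sigma_v$ you would start from need not even be contained in $P$, as you yourself note.

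What is missing is the concrete covering mechanism, and it is instructive to compare with the paper's: central symmetry is used not to classify $P$ but to cover it \emph{radially from the center}. Given $v\in P$, $v\neq 0$, the ray $\RR_{\ge 0}v$ meets a facet $F$ in a point $v'$. If $F$ is not a unimodular triangle, then $v'$ lies in a unit lattice square $D\subseteq F$ (\Cref{lem:sqare}, which uses only smoothness of the polygon $F$ and no global combinatorics of $P$), and then $\conv(D,-D)\subseteq P$ is a lattice parallelepiped containing $v$---this is the sole, and decisive, use of $P=-P$. If $F$ is a unimodular triangle, the parallel lattice section at distance $1$ is a dilate $rF$ with $r\ge 2$ (\Cref{lem:sections,lem:push}); the Cayley polytope $\conv(F,rF)$ is covered by unimodular simplices (\Cref{lem:Cayley}), and points of $P$ lying deeper than this slab are handled through a unit square in $rF$ exactly as before. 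Without an argument of this kind---or some other explicit construction replacing your unexecuted case analysis---your proposal does not establish the theorem.
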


We have organized the paper as follows. In \Cref{sec:prelim} we recall some basic facts about smooth lattice polytopes which we will apply in the proof of \Cref{thm:main}. In \Cref{sec:prf-main-res} we provide a proof of \Cref{thm:main}. We have structured the crucial steps of the proof into subsequent subsections. Finally in \Cref{sec:summary} we conclude the paper with some open questions which might help to settle \Cref{prob:oda} for the $3$-dimensional case.

\section{Preliminaries}
\label{sec:prelim}

The following lemma is an immediate consequence of having IDP.
\begin{Lemma}[{\cite[p.~65]{BG}}]\label{lem:IDP-cover}
  Let $P,P_1,\ldots,P_m\subseteq \mathbb{R}^d$ be lattice polytopes such that $P=P_1\cup \cdots \cup P_m$. If $P_1,\ldots, P_m$ are IDP, then so is $P$.
\end{Lemma}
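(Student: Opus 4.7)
The plan is to unwind the definition of IDP and exploit the fact that scalar multiplication commutes with unions. I would fix an arbitrary integer $n \geq 1$ and an arbitrary lattice point $p \in nP \cap \mathbb{Z}^d$, with the goal of realizing $p$ as a sum $p_1 + \cdots + p_n$ of lattice points $p_j \in P \cap \mathbb{Z}^d$.

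The key observation is that dilation distributes over unions, so
\[
  nP \;=\; n(P_1 \cup \cdots \cup P_m) \;=\; nP_1 \cup \cdots \cup nP_m.
\]
Consequently $p$ must belong to some dilate $nP_i$. Since $P_i$ is IDP by hypothesis, I can decompose $p = p_1 + \cdots + p_n$ with lattice points $p_j \in P_i \cap \mathbb{Z}^d$. The inclusion $P_i \subseteq P$ then upgrades each $p_j$ to a lattice point of $P$, yielding exactly the required decomposition.

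I do not anticipate any genuine obstacle. The whole argument reduces to the set-theoretic identity $n(A_1 \cup \cdots \cup A_m) = nA_1 \cup \cdots \cup nA_m$ combined with a one-line pigeonhole: one never has to mix lattice points coming from different $P_i$, because a single $nP_i$ already contains $p$ on the nose. This is presumably why the authors describe the lemma as an immediate consequence of the definition of IDP.
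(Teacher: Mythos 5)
Your proof is correct, and since the paper offers no argument of its own for this lemma (it simply cites Bruns--Gubeladze), your dilation-distributes-over-unions pigeonhole is exactly the standard one-line justification the authors have in mind. The only point worth being explicit about is that $nP$ denotes the pointwise dilate $\{nx : x\in P\}$, for which the identity $n(P_1\cup\cdots\cup P_m)=nP_1\cup\cdots\cup nP_m$ is indeed immediate.
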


From the definition of a smooth lattice polytope, the following fact straightforwardly follows.
\begin{Lemma}\label{lem:para}
  Let $P \subseteq \RR^d$ be a smooth $d$-dimensional lattice polytope. Let $v$ be a vertex of $P$ and let $p_1, \ldots,p_d$ denote the primitive ray generators on the edges on $v$. Then the parallelepiped spanned by $p_1,\ldots,p_d$ from $v$ does not contain any lattice points aside from its vertices. 
\end{Lemma}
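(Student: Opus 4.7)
The plan is to apply the smoothness hypothesis directly: by definition, the primitive edge directions $p_1, \ldots, p_d$ at the vertex $v$ form a $\mathbb{Z}$-basis of $(\aff P) \cap \mathbb{Z}^d$, which is all of $\mathbb{Z}^d$ because $P$ is $d$-dimensional. So after translating so that $v$ sits at the origin, the parallelepiped in question is nothing but the fundamental parallelepiped of $\mathbb{Z}^d$ with respect to the basis $p_1, \ldots, p_d$.

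From this the conclusion is essentially a uniqueness-of-coordinates statement. I would take an arbitrary lattice point $q$ in the parallelepiped and write $q - v = \sum_{i=1}^d \lambda_i p_i$ with $0 \le \lambda_i \le 1$. Since $q - v \in \mathbb{Z}^d$ and $p_1, \ldots, p_d$ is a $\mathbb{Z}$-basis of $\mathbb{Z}^d$, the expansion of $q - v$ in this basis has integer coefficients, and by uniqueness of the expansion each $\lambda_i$ must itself be an integer. Combined with $0 \le \lambda_i \le 1$, this forces $\lambda_i \in \{0,1\}$, which are precisely the $2^d$ vertices of the parallelepiped.

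There is no genuine obstacle here; the lemma is a one-line reformulation of smoothness. The only care needed is to record explicitly that in the $d$-dimensional case $(\aff P) \cap \mathbb{Z}^d = \mathbb{Z}^d$ (possibly after translation by $v$), so that the $\mathbb{Z}$-basis property of the $p_i$ really pins down lattice points in the full ambient lattice and not merely in some sublattice.
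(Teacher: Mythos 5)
Your proof is correct and is exactly the ``straightforward'' argument the paper has in mind (the paper states the lemma without proof, noting it follows immediately from the definition of smoothness): since $p_1,\ldots,p_d$ form a $\ZZ$-basis of $\ZZ^d$, any lattice point in the parallelepiped has integer coordinates in $\{0,1\}$ in this basis and is therefore a vertex. Your added remark that $(\aff P)\cap\ZZ^d=\ZZ^d$ in the full-dimensional case is the right point to make explicit; nothing further is needed.
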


The following two lemmas are known to the experts -- we include them for the sake of completeness. We start by introducing some notation.
\begin{Definition}\label{def:special}
  Let $P$ be a polytope and $a$ a linear function. For a real number $c$, let $P_c$ be the hyperplane cut of $P$:
  \[
    P_c \coloneqq \{x\in P \with a(x)=c\} \text{.}
  \]
  We call $c$ \emph{special} if $P_c$ contains a vertex of $P$. For fixed $P$ and $a$ the set of special $c$'s is finite.
\end{Definition}

Recall that a fan $\Sigma$ is said to \emph{coarsen} another fan $\Sigma'$ if any $\sigma' \in \Sigma'$ is contained in some cone $\sigma \in \Sigma$. We refer to \cite[Section 1]{BG} for details and references on fans.

In the following lemma, we assume the notation as in Definition \ref{def:special}.
\begin{Lemma}\label{lem:Ram}
  For $c_1< c_2$ the normal fans of $P_{c_1}$ and $P_{c_2}$ coincide if the interval $[c_1,c_2]$ does not contain special values. If $c_2$ is the only special value in this interval, then the normal fan of $P_{c_2}$ coarsens that of $P_{c_1}$ (see Figure \ref{fig:Ram}).
\end{Lemma}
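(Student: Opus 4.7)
The plan is to describe each cut $P_c$ via the facet inequalities of $P$ and track how these change with $c$. Fix a vector $v_0$ with $a(v_0)=1$ and identify $H_c := \{a = c\}$ with $H_0$ via translation by $-c v_0$. Writing $P = \{x : \ell_F(x) \ge b_F\}$ in facet form, $P_c$ is then described inside $H_0$ by the inequalities $\bar\ell_F(x) \ge b_F - c\,\ell_F(v_0)$, where $\bar\ell_F := \ell_F|_{H_0}$ and $F$ ranges over the facets of $P$ not parallel to $H_c$. The key observation is that the directions $\bar\ell_F$ do not depend on $c$; only the right-hand sides shift linearly.

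For the first statement, the normal fan of $P_c$ is determined by (i) which facets $F$ of $P$ contribute an honest facet to $P_c$ (equivalently, $\dim(F \cap H_c) = d-2$), and (ii) which tuples of such facets meet in common vertices of $P_c$. Both pieces of data can change only when a vertex of $P$ crosses $H_c$, i.e., at a special value. Thus on $[c_1,c_2]$ without special values, the combinatorial type of $P_c$ is constant in $c$, and together with the constancy of the normal directions $\bar\ell_F$ this forces the normal fan to be constant.

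For the coarsening statement, the first part already gives that the normal fan is constant on $[c_1,c_2)$; call it $\Sigma$. I would prove that $\Sigma_{c_2}$ coarsens $\Sigma$ via the following limiting argument: the normal cone of $P_c$ at a vertex $w$ is the closure of the set of linear functionals $\phi$ on $H_0$ that are uniquely maximized on $P_c$ at $w$. If $\phi$ lies in the interior of such a cone for $c$ slightly less than $c_2$, then the maximizer $w(c)$ depends continuously on $c$ and its limit $v$ as $c \to c_2^-$ is a maximizer of $\phi$ on $P_{c_2}$. Hence $\phi$ lies in the normal cone of $P_{c_2}$ at $v$. When several vertices of $P_c$ share the same limit $v$ (which happens precisely when $v$ is a vertex of $P$ with $a(v)=c_2$), the corresponding maximal cones of $\Sigma$ all embed into the same maximal cone of $\Sigma_{c_2}$, yielding exactly the coarsening relation.

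The main obstacle is ensuring that the limit $v = \lim_{c \to c_2^-} w(c)$ is genuinely a vertex of $P_{c_2}$ rather than an arbitrary boundary point. This follows from the description of vertices in the first part: every vertex of $P_c$ (for $c < c_2$ in our interval) is of the form $e \cap H_c$ for an edge $e$ of $P$, and as $c \to c_2^-$, such an intersection point limits either to a point $e \cap H_{c_2}$ (still a vertex of $P_{c_2}$) or to the endpoint of $e$ on $H_{c_2}$, which is a vertex of $P$ and hence a vertex of $P_{c_2}$. A minor further point is to handle $\phi$ lying on the boundary of a normal cone by a standard closure argument, since the maximal cones were treated by the interior case.
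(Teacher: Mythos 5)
Your argument is correct, but it takes a genuinely different route from the paper: the paper does not prove \Cref{lem:Ram} from scratch at all, it simply cites Lemma 2.2.2 of Rambau's thesis, viewing the cuts $P_c$ as fibers of the projection $a\colon P\to\RR$ (with Schneider's book as an alternative reference). You instead give a self-contained, elementary proof: after identifying all hyperplanes $\{a=c\}$ with a fixed one, $P_c$ is cut out by inequalities whose normals are independent of $c$ and whose right-hand sides vary linearly, so the normal fan can only change when the tight-set combinatorics changes, i.e.\ when the set of edges of $P$ met by the hyperplane changes, and that happens only at special values; this gives the first claim, and the coarsening follows by letting $c\to c_2^-$ and tracking the unique $\phi$-maximizer $w(c)=e\cap H_c$. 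Two small points are worth a line in a written-up version: (i) the step ``the limit of the maximizers is a maximizer'' uses that $\max_{P_c}\phi\to\max_{P_{c_2}}\phi$, equivalently that $P_c$ converges to $P_{c_2}$ as $c\to c_2^-$; this is easy (join any $x\in P_{c_2}$ to a point of $P_{c_1}$ by a segment and intersect with $H_c$), but it is used implicitly; (ii) the ``main obstacle'' you flag is not actually needed: since $v=\lim w(c)$ is the same point for every $\phi$ in the interior of a fixed maximal cone $\sigma$, all such $\phi$ lie in the normal cone of $P_{c_2}$ at the minimal face containing $v$, which is a cone of the normal fan of $P_{c_2}$ whether or not $v$ is a vertex, and taking closures gives $\sigma$ (and hence all its faces) inside that cone --- though your edge description does settle the vertex question correctly anyway. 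The trade-off between the two approaches is clear: the paper's citation is short and leans on the general theory of projections and polyhedral subdivisions, while your argument is elementary and self-contained at the cost of spelling out these routine continuity details and the degenerate cases where a slice is empty or lower-dimensional.
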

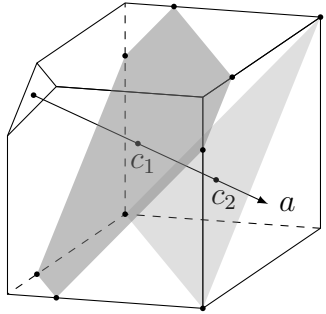
\begin{figure}[!ht]
  \begin{tikzpicture}[scale=0.7, rotate around y=-10]
    \draw (-2,-2,2) -- (-2,1,2) -- (-1,2,2) -- (2,2,2) -- (2,-2,2) -- cycle;
    \draw (-2,2,-2) -- (2,2,-2) -- (2,2,2) -- (-1,2,2) -- (-2,2,1) -- cycle;
    \draw (2,-2,-2) -- (2,2,-2) -- (2,2,2) -- (2,-2,2,2) -- cycle;
    
    
    \draw (-2,1,2) -- (-2,2,1); 
    
    \draw[-latex] (-5/3,5/3,5/3) -- (4/3,-4/3,-4/3) node[right] {$a$};
    \fill (-5/3,5/3,5/3) circle (1.5pt);
    \fill (2/3,-2/3,-2/3) circle (1.5pt) node[below,xshift=3] {$c_2$};
    \fill (-1/3,1/3,1/3) circle (1.5pt) node[below,xshift=2] {$c_1$};
    
    \draw[dashed] (2,-2,-2) -- (-2,-2,-2) -- (-2,-2,2);
    \draw[dashed] (-2,-2,-2) -- (-2,2,-2);
    
    \fill[draw=none,color=gray,opacity=0.25] (2,2,-2) -- (2,-2,2) -- (-2,-2,-2) -- cycle; 
    \fill[draw=none,color=gray,opacity=0.5] (-1,2,-2) -- (2,2,1) -- (2,1,2) -- (-1,-2,2) -- (-2,-2,1) -- (-2,1,-2) -- cycle; 
    \fill (-1,2,-2) circle (1.5pt);
    \fill (2,2,1) circle (1.5pt);
    \fill (2,1,2) circle (1.5pt);
    \fill (-1,-2,2) circle (1.5pt);
    \fill (-2,-2,1) circle (1.5pt);
    \fill (-2,1,-2) circle (1.5pt);
    \fill (2,2,-2) circle (1.5pt);
    \fill (2,-2,2) circle (1.5pt);
    \fill (-2,-2,-2) circle (1.5pt);
  \end{tikzpicture}
  \caption{Illustration of Lemma \ref{lem:Ram}.}
  \label{fig:Ram}
\end{figure}
\begin{proof}
  This is a consequence of \cite[Lemma 2.2.2]{Ram}, where we regard the hyperplane cuts $P_c$ as fibers of a projection defined by $a$, from the polytope $P$ to the line. See also \cite[Lemmas 2.4.12 and 13]{Schneider}.
\end{proof}

\begin{Lemma}\label{lem:cuts}\label{lem:sections}
  Let $P\subseteq \RR^d$ be a smooth $d$-dimensional lattice polytope, $F$ a facet of $P$ and $a \colon \RR^d \rightarrow \RR$ the primitive linear functional defining $F$, i.e., $a(\ZZ^d) = \ZZ$, $F = \set{x\in P \with a(x) = c}$ for some $c \in \ZZ$ and $a(x) \geq c$ for all $x \in P$. Then $F' \coloneqq P_{c+1}$ is a lattice polytope whose normal fan coarsens that of $F$. 
\end{Lemma}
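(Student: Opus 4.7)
The plan is to prove the two assertions separately: that $F'$ is a lattice polytope, and that its normal fan coarsens that of $F$.

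For the first assertion, the key observation is that at each vertex $u$ of $F$, the primitive ray generator $p$ of the unique edge of $P$ at $u$ not lying in $F$ satisfies $a(p) = 1$. By smoothness of $P$ at $u$ the primitive ray generators $p, p_1, \ldots, p_{d-1}$ of the edges at $u$ (with $p_1, \ldots, p_{d-1}$ generating the edges of $F$) form a $\ZZ$-basis of $\ZZ^d$, while by smoothness of the facet $F$ the directions $p_1, \ldots, p_{d-1}$ form a $\ZZ$-basis of $\ker a \cap \ZZ^d$; comparing determinants gives $a(p) = \pm 1$, with positive sign since $p$ points into $P$. Consequently $u + p$ is a lattice point at $a$-level $c+1$. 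Since vertices of $P$ have integer $a$-values at least $c$, any edge of $P$ crossing $\{a = c+1\}$ must have its lower endpoint $u$ in $F$ and the crossing point equals $u + p$, a lattice point; together with vertices of $P$ that happen to lie at level $c+1$, this shows every vertex of $F'$ is a lattice point.

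For the coarsening claim, the plan is to combine \Cref{lem:Ram} with a direct identification of $F$ as a limit of cuts from above. Since vertices of $P$ have integer $a$-values, the open interval $(c, c+1)$ contains no special values of $a$. By \Cref{lem:Ram}, the normal fan $\Sigma^\ast$ of $P_{c'}$ is independent of $c' \in (c, c+1)$, and the normal fan of $F' = P_{c+1}$ coarsens $\Sigma^\ast$. It therefore suffices to show that the normal fan of $F$ equals $\Sigma^\ast$. For $\epsilon > 0$ small, any facet of $P_{c+\epsilon}$ arises as the intersection of $\{a = c+\epsilon\}$ with a facet $F_G$ of $P$ whose minimum $a$-value is at most $c+\epsilon$; by integrality this minimum equals $c$, so $F_G \cap F \neq \emptyset$. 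Since $P$ is simple (being smooth), any two facets of $P$ sharing a point must share a full ridge, so $F_G$ is adjacent to $F$. Conversely, every facet of $F$ comes from an adjacent facet of $P$ that transversally cuts $P_{c+\epsilon}$. Matching the contributed normal directions -- each being the projection modulo $a$ of a primitive outward normal of $P$ -- yields the equality of fans, and hence the normal fan of $F'$ coarsens that of $F$.

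The most delicate point is the identification of the normal fan of $F$ with $\Sigma^\ast$: one must rule out contributions to $P_{c+\epsilon}$ from facets of $P$ not adjacent to $F$. This is handled by combining simplicity of $P$ (so that facets sharing even a single point must share a ridge) with the integrality of the $a$-values of its vertices (so that any non-adjacent facet has minimum $a$-value at least $c+1$ and does not intersect level $c+\epsilon$ for $\epsilon \in (0,1)$).
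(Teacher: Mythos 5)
Your first assertion --- that $F'$ is a lattice polytope --- is proved correctly, and essentially in the same way as in the paper: smoothness at a vertex $u$ of $F$ forces the primitive generator $p$ of the unique edge at $u$ leaving $F$ to satisfy $a(p)=1$, so every vertex of $F'$ is either a vertex of $P$ at level $c+1$ or a point $u+p$.

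The coarsening argument, however, has a genuine gap at its final step. You correctly show (using integrality of the vertex levels and simplicity of $P$) that for small $\epsilon>0$ the facets of $P_{c+\epsilon}$ and the facets of $F$ are both indexed by the facets of $P$ adjacent to $F$, with corresponding facets having the same outer normal direction. But for $d-1\ge 3$ two full-dimensional polytopes with the same set of facet normal directions need not have the same normal fan, so ``matching the contributed normal directions'' does not by itself yield the equality of the normal fan of $F$ with $\Sigma^{\ast}$. Even adding the information from \Cref{lem:Ram} (applied with $-a$) that the fan of $F=P_c$ coarsens $\Sigma^{\ast}$ does not close this: a complete fan can be strictly refined by a complete polytopal fan with the same rays --- e.g.\ the normal fan of the octahedron (the face fan of the cube) is refined by the simple fan obtained by generically perturbing the octahedron's facet right-hand sides, and both fans have exactly the eight rays $(\pm1,\pm1,\pm1)$. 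What is missing is an argument that the face incidences, not just the facet normals, agree; for instance, a local computation at each vertex $v$ of $F$: since the vertex cone of $P$ at $v$ is simple, its slice at level $c+\epsilon$ is a translate of the vertex cone of $F$ at $v$, and every vertex of $P_{c+\epsilon}$ arises this way, which identifies the two fans. The paper sidesteps the issue with a different device: it replaces $P$ by a polytope $P'$ with the same normal fan in which only the hyperplane supporting $F$ is pushed slightly outward; then no value in $[c,c+1)$ is special for $P'$, while $P'_l=P_l$ for $l\in[c,c+1]$, so \Cref{lem:Ram} applies directly and gives that the fan of $F'=P'_{c+1}$ coarsens that of $F=P'_c$. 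Note that in the only case used in this paper, $d=3$, your step is harmless, since a complete fan in $\RR^2$ is determined by its rays; but the lemma is stated for all $d$, so the gap should be closed along one of these lines.
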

\begin{proof}
  As $P$ is simple all but one of the edge directions from each vertex of $F$ lie in $F$. Further the smoothness condition implies that there is a lattice point on any edge adjacent to a vertex in $F$ but not contained in $F$ at lattice distance $1$ from the affine hull of $F$. Hence $F'$ is the convex hull of primitive ray generators of edges adjacent to the vertices in $F$, but not belonging to $F$.
  
  The statement about the normal fan is a general fact about simple polytopes. Let $P'\supset P$ be a (not necessarily lattice) polytope with the same normal fan as $P$. The supporting hyperplanes of $P'$ coincide with those of $P$, apart from the hyperplane supporting $F$, which is shifted parallelly by $1\gg\epsilon >0$ in the outer direction. As $P$ is simple there are no vertices of $P'$ in $P'_c$ (recall that a vertex is contained in at least $d$ facets). The values in $[c,c+1)$ are nonspecial for $P'$, as $a$ is primitive. Further, for $l\in [c,c+1]$ we have $P'_l=P_l$. By Lemma \ref{lem:Ram}, $P'_{c+1}=P_{c+1}$ may only have a fan that coarsens that of $F=P'_c$.
\end{proof}

\section{Proof of the Main Result}
\label{sec:prf-main-res}

\subsection{Covering of Lattice Polygons}
\label{ssec:cover}

\begin{figure}[ht!]
  \begin{tikzpicture}[scale=1, radius=0.1]
    \fill[pattern color=gray!30, pattern=north west lines] (1, 1) rectangle (3.5, 3.5);
    \fill[pattern color=gray!30, pattern=north west lines] (-2.5, 2.50) -- (-1, 1) -- (-2.5,1) -- cycle;
    \fill[pattern color=gray!30, pattern=north west lines] (2.5, -2.5) -- (1, -1) -- (1, -2.5) -- cycle;

    \fill[gray!40] (0,0) -- (1,0) -- (0,1) -- cycle;

    \draw[gray!70] (-2.5,-2.5) grid (3.5,3.5);
    \draw[densely dotted, very thick] (3.5, 1) -- (1, 1) -- (1, 3.5);
    \draw[densely dotted, very thick] (-2.5, 2.50) -- (-1, 1) -- (-2.5,1);
    \draw[densely dotted, very thick] (2.5, -2.5) -- (1, -1) -- (1, -2.5);

    \foreach \p in {(0,0),(1,0),(0,1)}
      \fill \p circle;

    \foreach \p in {(0,0),(1,0),(0,1)}
      \fill \p circle;

    \foreach \p in {(1,1),(-1,1),(1,-1)}
      \draw[] \p circle;

    \node[anchor=north east] at (0,0) {$v_1$};
    \node[anchor=north west] at (1,0) {$v_2$};
    \node[anchor=south east] at (0,1) {$v_3$};

    \node[anchor=south west] at (1,1) {$q_1$};
    \node[anchor=north east] at (-1,1) {$q_2$};
    \node[anchor=north east] at (1,-1) {$q_3$};

    \node at (-1.5,-1.5) {$A$};
    \node at (-0.5,2.5) {$B$};
    \node at (2.5,-0.5) {$C$};
  \end{tikzpicture}
  \caption{Illustration of the proof of \Cref{lem:sqare}.}
  \label{fig:square}
\end{figure}
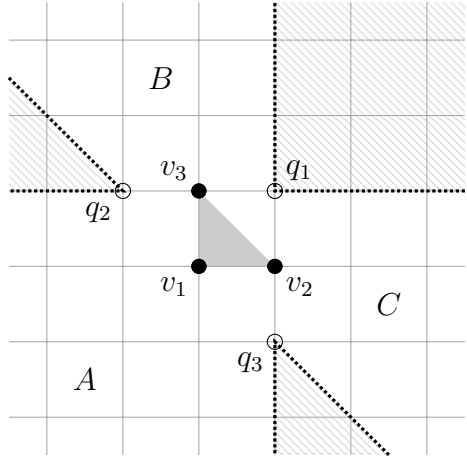

\begin{Lemma}\label{lem:sqare}
  Let $F \subseteq \RR^2$ be a smooth lattice polygon. Every unimodular simplex $\Delta \subsetneq F$ can be extended to a lattice unit square in $F$.
\end{Lemma}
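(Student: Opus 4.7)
The plan is to proceed by contradiction. Define the three candidate "fourth vertices" of the lattice unit squares extending $\Delta$ as $q_i = v_j+v_k-v_i$ for $\{i,j,k\}=\{1,2,3\}$ --- the reflections of each vertex of $\Delta$ across its opposite edge. If some $q_i \in F$, then $\conv(\Delta \cup \{q_i\})$ is a lattice unit square contained in $F$ by convexity, so it suffices to exhibit such a $q_i$. Suppose for contradiction that $q_1,q_2,q_3 \notin F$, and normalize via a unimodular transformation so that $\Delta=\conv((0,0),(1,0),(0,1))$; then $q_1=(1,1)$, $q_2=(-1,1)$, $q_3=(1,-1)$.

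For each $i$, convexity provides an edge $H_i$ of $F$ with primitive inward normal $n_i$ strictly separating $q_i$ from $\Delta$. A short sign analysis of the inequalities $n_i\cdot v_j\geq c_i$ (for $j=1,2,3$) together with $n_i\cdot q_i<c_i$ shows that no single line can separate two of the $q_i$'s from $\Delta$ simultaneously, so $H_1,H_2,H_3$ are three distinct edges of $F$, with outer normals in three disjoint angular sectors of the plane.

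To derive the contradiction, combine $\Delta\subsetneq F$ with smoothness. If every $v_i$ were a vertex of $F$ whose tangent cone equals the tangent cone of $\Delta$ at $v_i$, and every edge at these vertices had primitive step length $1$, then the edges of $F$ incident to $v_1,v_2,v_3$ would coincide with the three edges of $\Delta$, forcing $F=\Delta$ and contradicting $\Delta\subsetneq F$. Hence either some $v_i$ is not a vertex of $F$, or the tangent cone of $F$ at some $v_i$ is strictly wider than that of $\Delta$. Using the $S_3$-symmetry of $\Delta$ (which permutes $v_1,v_2,v_3$ and correspondingly $q_1,q_2,q_3$), relabel so that this deviation occurs at $v_3$. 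In the wider-cone subcase, the two primitive edge directions at $v_3$ form a lattice basis $\{p_1,p_2\}$ whose nonnegative span contains $(0,-1)$ and $(1,-1)$. Smoothness ($|\det(p_1,p_2)|=1$) paired with convexity (the cone must contain the directions from $v_3$ into $\Delta$) restricts each primitive direction, after pairing with $(1,-1)$ or with $(0,-1)$, to satisfy $d_1+d_2=-1$ with $d_1\leq 0$ --- giving options $(-1,0),(-2,1),(-3,2),\ldots$ --- or $d_1=1$ with $d_2\geq 0$ --- giving $(1,0),(1,1),(1,2),\ldots$. Each such option either produces an endpoint of an edge of $F$ equal to $q_2=(-1,1)$ or $q_1=(1,1)$ directly, or places a lattice vertex of $F$ at $(-k,1)$ or $(1,k)$ so that $q_2$ (resp.\ $q_1$) lies on the horizontal segment joining $v_3$ (resp.\ $v_2$) to that vertex. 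In either event $q_1\in F$ or $q_2\in F$ by convexity, contradicting our standing assumption.

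The main technical obstacle will be handling the subcase where $v_3$ is not a vertex of $F$ (so one must analyze the primitive direction of the edge of $F$ through $v_3$ and invoke smoothness at its endpoints instead), and the cases where \emph{both} primitive edge directions at $v_3$ are non-trivial --- for instance spanned by $(-1,-1)$ and $(2,1)$ --- in which no edge endpoint is itself a $q_j$. In such cases one must propagate the smoothness condition from $v_3$ to its neighboring vertex of $F$ along the boundary and iterate, showing that this chain eventually produces an intermediate lattice vertex of $F$ coinciding with some $q_j$ (or lying on a segment forcing one). Making this propagation rigorous is where most of the technical work lies.
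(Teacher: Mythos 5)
Your setup coincides with the paper's (argue by contradiction, normalize $\Delta$ to the standard simplex, introduce the three candidate fourth vertices $q_1=(1,1)$, $q_2=(-1,1)$, $q_3=(1,-1)$, and use convexity to exclude lattice points of $F$ from the regions behind them), but the core of your argument is not complete. The enumeration of primitive edge directions at the deviating vertex $v_3$ implicitly assumes that one of the two edge directions of $F$ at $v_3$ coincides with an edge direction of $\Delta$, namely $(0,-1)$ or $(1,-1)$; this fails in general. For instance, the unimodular cone at $(0,1)$ spanned by $(-1,-1)$ and $(1,0)$ strictly contains the tangent cone of $\Delta$ at $v_3$, and neither generator is an edge direction of $\Delta$, so none of your listed options applies. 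You acknowledge this, as well as the subcase in which $v_3$ is not a vertex of $F$ at all, and propose to ``propagate the smoothness condition along the boundary and iterate,'' but you give no argument that this walk terminates, stays controlled by the forbidden regions, or ever produces a vertex forcing some $q_j$ into $F$. That unproved propagation is precisely where the content of the lemma lies, so as written the proposal has a genuine gap rather than a complete proof.

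For comparison, the paper avoids any boundary walk by making one more use of convexity before invoking smoothness: since $\Delta\subsetneq F$ there is an extra lattice point of $F$, which by the symmetry of the configuration may be assumed to lie in the lower-left region strictly to the left of $v_1$; because $q_2\notin F$, convexity then forces every further lattice point of $F$ in the upper region to lie on the vertical line through $v_3$. Taking the topmost such point $v$ (a vertex of $F$, possibly $v_3$ itself), one edge at $v$ goes down and leftwards missing $v_1$, the other down and rightwards, so $v_1$ lies in the interior of the parallelepiped spanned by the two primitive edge generators at $v$, contradicting \Cref{lem:para}. If you want to rescue your approach, you need a single decisive application of such an ``empty parallelepiped'' statement at one well-chosen vertex (or some other terminating mechanism) in place of the open-ended iteration along the boundary.
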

\begin{proof}
  After a unimodular transformation, we may assume that $\Delta$ is the standard simplex, i.e., the central triangle in \Cref{fig:square}. Assume to the contrary that $\Delta$ cannot be extended to a unit square. This means that the three points $q_1, q_2$ and $q_3$ in \Cref{fig:square} are not contained in $F$. By convexity, it follows that $F$ does not contain any lattice point in the three shaded regions. On the other hand, we assumed that $\Delta \neq F$, so $F$ has to contain at least one further lattice point besides $v_1, v_2$ and $v_3$. Without loss of generality, we may assume that there is another lattice point in the region $A$. Further, by symmetry, we may even assume  that there is a lattice point in $A$ that is strictly to the left (and possibly below) of $v_1$ with respect to Figure \ref{fig:square}.

This implies that all further lattice points in region $B$ have to lie on the vertical line through $v_3$, as otherwise $q_2$ would lie in $F$. Let $v$ be the point furthest up on this line, where $v = v_3$ is possible. This is a vertex of $F$, and we consider the parallelepiped spanned by the two primitive ray generators on the edges on it. One of the edges goes down and leftwards into region $A$, but misses $v_1$. The other one goes down and rightwards into region $C$, possibly hitting $v_2$. Hence, $v_1$ lies in the interior of the parallelepiped, contradicting \Cref{lem:para}.
\end{proof}

\subsection{Pushing Facets}
\label{ssec:push}

\begin{Lemma}\label{lem:push}
  Let $P\subseteq \RR^3$ be a $3$-dimensional, smooth lattice polytope with a facet $F$ that is a unimodular triangle. Then (up to translation) the section of $P$ defined in \Cref{lem:sections} coincides with $rF$ for some integer $r \ge 0$.

  If $P$ has interior lattice points, (in particular, if $P=-P$) then $r\geq 2$.
\end{Lemma}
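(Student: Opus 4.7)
The plan is to invoke \Cref{lem:sections}, which gives that $F'$ is a lattice polytope whose normal fan coarsens that of $F$. Since $F$ is a triangle, its normal fan in the plane of $F$ has three maximal cones bounded by three rays, and these rays are pairwise non-antipodal (because no two edges of a triangle are parallel). A complete fan in $\RR^2$ with exactly two rays must have those rays antipodal so that both maximal cones are half-planes; thus no two-ray coarsening of the normal fan of $F$ exists. Consequently, the normal fan of $F'$ either equals that of $F$ or is the trivial one-cone fan. In the first case, $F'$ is a triangle with edges parallel to those of $F$, hence $F' = rF + t$ for some $r > 0$ and lattice translation $t$; since $F$ is unimodular and $F'$ is a lattice polytope, $r$ must be a positive integer. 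In the second case, $F'$ is a single lattice point, corresponding to $r = 0$.

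For the second statement, I rule out $r \in \{0,1\}$ under the assumption that $P$ has an interior lattice point. If $r = 0$, a short convexity argument (projecting from any point above the height of $F'$ to the three vertices of $F$) shows that $P$ cannot extend above $F'$, so $P$ is the tetrahedron with base $F$ and apex $u = F'$; its three primitive edges at $u$ form a $\ZZ^3$-basis by smoothness, so $P$ is a unimodular $3$-simplex and has no interior lattice point. If $r = 1$, then $F' = F + t$, and the formula $u_i = v_i + e_i$ from the proof of \Cref{lem:sections} forces the three primitive outward edge directions $e_1, e_2, e_3$ at the vertices of $F$ to all equal $t$. After a unimodular change of coordinates I may assume that $F$ is the standard triangle in $\{z = 0\}$ and $t = (0,0,1)$, so all three edges leaving $F$ point vertically upward. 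The facet of $P$ containing the edge $v_1 v_2$ of $F$ then also contains the two vertical edges from $v_1$ and $v_2$, and so lies in the plane $\{y = 0\}$; similarly the two other facets of $P$ adjacent to $F$ lie in $\{x = 0\}$ and $\{x + y = 1\}$. These are supporting hyperplanes of all of $P$, so $P \subseteq F \times \RR_{\geq 0}$. Since the unimodular triangle $F$ has no lattice points in its relative interior, $P$ has no interior lattice point either, contradicting the assumption.

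The main technical point is the $r = 1$ case: one must recognize that the three facets adjacent to $F$ are fully determined by the edges of $F$ together with the parallel vertical edges from its vertices, confining all of $P$ to the vertical prism over $F$. Once this containment is in hand, the absence of interior lattice points follows immediately from the emptiness of the relative interior lattice of a unimodular triangle.
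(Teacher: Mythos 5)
Your proof of the first claim is essentially the paper's: the normal fan of a triangle admits no proper coarsening other than the trivial fan, so by \Cref{lem:sections} the slice $F'$ is either a homothetic copy of $F$ (hence, up to translation, $rF$ with $r$ a positive integer because $F$ is unimodular and $F'$ is a lattice polytope) or a single point ($r=0$). For the second claim the paper gives no argument at all -- it is stated as a remark -- so your case analysis is added content, and most of it is sound: in the $r=0$ case the central projection argument confines $P$ to the slab $c\le a\le c+1$, and since the vertices of a lattice polytope have integral $a$-values, $P=\conv(F,u)$ is a unimodular tetrahedron; and in the $r=1$ case the confinement of $P$ to the prism over $F$ is the right idea.

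There is, however, one step in the $r=1$ case that you assert rather than prove: from $F'=\conv(v_1+e_1,\,v_2+e_2,\,v_3+e_3)=F+t$ you conclude $e_i=t$ for every $i$. A priori this equality of triangles only gives $v_i+e_i=v_{\sigma(i)}+t$ for some permutation $\sigma$ (the unimodular triangle $F+t$ has no lattice points besides its vertices), i.e.\ $e_i=v_{\sigma(i)}-v_i+t$, and a nontrivial $\sigma$ is perfectly compatible with the smoothness condition at each individual vertex, so it must be excluded by a global argument. The exclusion is not hard, and in fact your own facet argument does it: since $P$ is simple, the facet $G\ne F$ through an edge $[v_i,v_j]$ of $F$ contains the outward edges at both $v_i$ and $v_j$, so $v_j-v_i$, $e_i$, $e_j$ must be coplanar; for any nontrivial $\sigma$ one can choose the edge $[v_i,v_j]$ so that these three vectors are linearly independent (for a transposition of $i$ and $j$ they are coplanar, but then the two outward edges would cross in their relative interiors, which is equally impossible). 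Alternatively, identify vertices of the slices $P_l$, $c\le l\le c+1$, through their normal cones as in \Cref{lem:Ram}: the vertex of $P_l$ with the same normal cone as $v_i$ is $v_i+(l-c)e_i$, so by continuity the vertex of $F+t$ with that normal cone is $v_i+e_i$, which equals $v_i+t$, giving $e_i=t$ directly. With this point supplied, your argument is complete.
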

\begin{proof}
  The normal fan of $F$ has no proper coarsenings. Hence, by Lemma~\ref{lem:sections}, $F$ and $F'$ are similar and since $F$ is a unimodular triangle and $F'$ is a lattice polytope, $F'=rF$ for some integer $r\geq 0$. We note that if $r=0$ or $r=1$ then $P$ does not contain interior lattice points.
\end{proof}

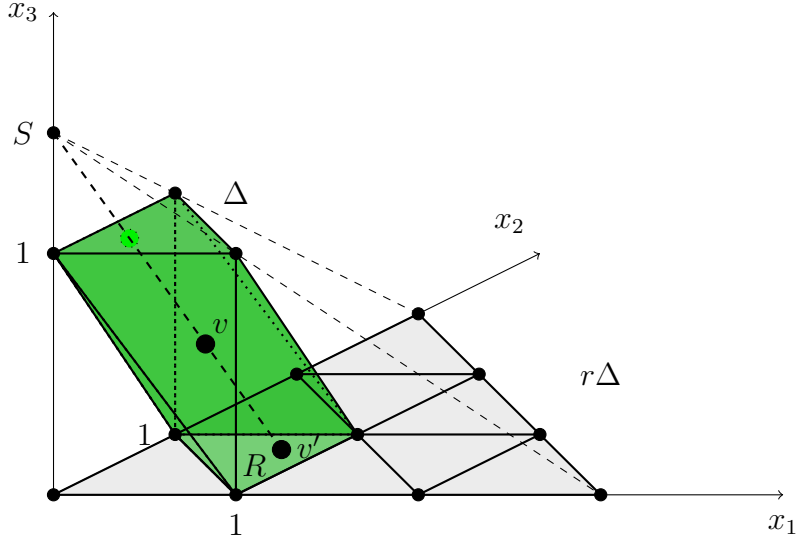
\begin{figure}[!ht]
  \begin{tikzpicture}[scale=0.8]
    \fill[green!70!black, opacity=0.5] (0,4) --(3,4) -- (2,5) -- (0,4);
    \fill[gray!30, opacity=0.5] (0,4) --(3,4) -- (2,5) -- (0,4);
    \fill[gray!30, opacity=0.5] (0,0) --(9,0) -- (6,3) -- (0,0);
    \fill[green!70!black, opacity=0.5] (0,4) --(3,0) -- (2,1) -- (0,4);
    \fill[green!70!black, opacity=0.5] (0,4) --(3,0) -- (3,4) -- (0,4);
    \fill[green!70!black, opacity=0.5] (3,0) --(5,1) -- (3,4) -- (3,0);
    \fill[green!70!black, opacity=0.5](2,1) -- (2,5) -- (0,4)--(2,1);
    \fill[green!70!black, opacity=0.5](2,1) -- (5,1) -- (2,5)--(2,1);
    \fill[green!70!black, opacity=0.5](5,1) -- (3,4)--(2,5)--(5,1);

    \filldraw [black] (0,0) circle (0.1cm);
    \filldraw [black] (0,6) circle (0.1cm); 
    \filldraw [black] (3,0) circle (0.1cm);
    \filldraw [black] (6,0) circle (0.1cm);
    \filldraw [black] (9,0) circle (0.1cm);
    \filldraw [black] (2,1) circle (0.1cm);
    \filldraw [black] (5,1) circle (0.1cm);
    \filldraw [black] (8,1) circle (0.1cm);
    \filldraw [black] (4,2) circle (0.1cm);
    \filldraw [black] (7,2) circle (0.1cm);
    \filldraw [black] (6,3) circle (0.1cm);
    \filldraw [black] (0,4) circle (0.1cm);
    \filldraw [black] (3,4) circle (0.1cm);
    \filldraw [black] (2,5) circle (0.1cm);

    \draw[->] (0,0) -- (0,8);
    \draw[->] (0,0) -- (12,0);
    \draw[->] (0,0) --  (8,4);
    \draw[thick] (0,0) --(9,0) -- (6,3) -- (0,0);
    \draw[thick] (2,1)--(8,1);
    \draw[thick] (4,2) --(7,2);
    \draw[thick] (2,1)--(3,0);
    \draw[thick] (4,2) --(6,0);
    \draw[thick] (6,0)--(8,1);
    \draw[thick] (3,0) --(7,2);

    \filldraw [dotted, fill=green!95!black] (1.25,4.25) circle (0.15cm);

    \draw[thick] (0,4) --(3,4) -- (2,5) -- (0,4);
    \draw[dashed] (0,6) --(9,0);
    \draw[dashed] (0,6) --(6,3);
    \draw[thick] (0,4) --(3,0) -- (2,1) -- (0,4);
    \draw[thick] (3,0) --(5,1) -- (3,4) -- (3,0);
    \draw[thick,dotted] (0,4) --(3,0) -- (2,1) -- (0,4);
    \draw[thick,dotted] (2,1) -- (2,5) -- (0,4)--(2,1);
    \draw[thick, dotted](2,1) -- (5,1) -- (2,5)--(2,1);

    \draw[thick, dashed](0,6) -- (3.75,0.75);

    \filldraw [black] (2.5,2.5) circle (0.15cm);
    \filldraw [black] (3.75,0.75) circle (0.15cm);

    \node at (3,-0.5) {$1$};
    \node at (-0.5,4) {$1$};
    \node at (1.5,1) {$1$};
    \node at (-0.5,6) {$S$};
    \node at (2.75,2.8) {$v$};
    \node at (4.2,0.8) {$v'$};
    \node at (12,-0.5) {$x_1$};
    \node at (-0.5,8) {$x_3$};
    \node at (7.5,4.5) {$x_2$};

    \node at (3,5) {$\Delta$};
    \node at (9,2) {$r\Delta$};
    \node at (3.3,0.5) {$R$};
  \end{tikzpicture}
  \caption{Illustration of the proof of \Cref{lem:Cayley}.}
  \label{fig:Cayley}
\end{figure}

\begin{Lemma}\label{lem:Cayley}
  Let $\Delta \subseteq \RR^2$ be a unimodular triangle and $r\geq 1$ an integer. Then the Cayley polytope of $\Delta$ and its $r$-th dilate, i.e., $Q = \conv((\Delta,1),(r\Delta,0))\subseteq \RR^3$, can be covered by unimodular simplices. In particular, it is IDP.  
\end{Lemma}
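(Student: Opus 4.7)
The plan is to exhibit an explicit unimodular triangulation of $Q$, whence IDP follows from \Cref{lem:IDP-cover}. After a unimodular change of coordinates we may assume $\Delta = \conv((0,0),(1,0),(0,1))$, so that $Q$ has vertices $(0,0,0), (r,0,0), (0,r,0)$ at height $0$ and $(0,0,1), (1,0,1), (0,1,1)$ at height $1$; set $v := (0,0,1)$. I would build the triangulation by coning from $v$: the two facets of $Q$ not containing $v$ are the base $r\Delta \times \{0\}$ and the slanted trapezoid $T := \conv((r,0,0),(0,r,0),(0,1,1),(1,0,1))$ lying in the hyperplane $x+y+(r-1)z = r$. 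Since $Q$ is the union of the pyramids with apex $v$ over these two facets, it suffices to triangulate each pyramid into unimodular tetrahedra in a manner that is compatible on their shared face.

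For the pyramid over $r\Delta \times \{0\}$, I would take the standard unimodular triangulation of $r\Delta$ into $r^2$ unit triangles (upward triangles $\conv((i,j),(i+1,j),(i,j+1))$ and downward triangles $\conv((i+1,j),(i,j+1),(i+1,j+1))$) and cone each to $v$. Because $v$ lies at lattice distance $1$ from the plane $z=0$, every resulting tetrahedron is unimodular, as a $3\times 3$ determinant calculation confirms.

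For the pyramid over $T$, I first note that the intrinsic lattice on $\aff(T)$ has basis $(1,-1,0)$ and $(0,-(r-1),1)$, in which coordinates $T$ becomes the trapezoid $\conv((0,0),(r,0),(1,1),(0,1))$. I subdivide this into $r+1$ intrinsically unimodular triangles: the two triangles $\conv((0,0),(1,0),(0,1))$ and $\conv((1,0),(0,1),(1,1))$ covering the unit square at the narrow end, plus the fan $\conv((k,0),(k+1,0),(1,1))$ for $k = 1, \ldots, r-1$. Since $v$ is also at lattice distance $1$ from $\aff(T)$ (indeed $0 + 0 + (r-1)\cdot 1 = r-1$, one below the value $r$ taken on $T$), coning each of these $r+1$ triangles to $v$ produces unimodular tetrahedra in $\ZZ^3$.

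Finally, the two pyramids meet along the triangle $\conv((r,0,0),(0,r,0),v)$, the cone from $v$ over the common edge $T \cap (r\Delta \times \{0\})$. Both triangulations split this edge at exactly the lattice points $(k, r-k, 0)$ for $k = 0, \ldots, r$, so they agree on the shared triangle. The pieces thus glue to a unimodular triangulation of $Q$ with $r^2 + r + 1$ tetrahedra, matching $6\operatorname{Vol}(Q)$. The main obstacle is finding the correct subdivision of $T$: splitting $T$ by a single diagonal gives, upon coning to $v$, a tetrahedron of volume $r/6$, which is not unimodular once $r \geq 2$. The remedy is the triangulation above, which uses both top lattice points $(0,1,1)$ and $(1,0,1)$ and fans to the bottom lattice segments one at a time so that every resulting simplex has unit volume.
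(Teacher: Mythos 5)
Your proof is correct, and it takes a genuinely different route from the paper. The paper does not triangulate $Q$ at all: it triangulates the bottom facet $r\Delta$ by translates of $\pm\Delta$ and then, projecting through the center of similarity $S$ of $(\Delta,1)$ and $(r\Delta,0)$, covers $Q$ by the (overlapping) subpolytopes $\conv((\Delta,1),(R,0))$ with $R$ a triangle of that triangulation; each such prism or antiprism is then observed to have a unimodular triangulation because every lattice simplex inside it is unimodular. You instead cone from the vertex $v=(0,0,1)$ over the two facets of $Q$ missing $v$ and exhibit an explicit unimodular triangulation: the coordinate computations check out — $(1,-1,0),(0,-(r-1),1)$ is indeed a lattice basis of the direction lattice of $\aff(T)$ (the $2\times 2$ minors have gcd $1$), $T$ becomes the trapezoid $\conv((0,0),(r,0),(1,1),(0,1))$, your $r+1$ triangles tile it unimodularly, $v$ lies at lattice distance $1$ from both $\{z=0\}$ and $\aff(T)$ since $x+y+(r-1)z$ is primitive, and the count $r^2+r+1$ matches the normalized volume of $Q$. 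Your observation about the failure of the naive diagonal split of $T$ (normalized volume $r$) correctly identifies why the fan from $(1,1)$ is needed. What each approach buys: the paper's argument is shorter, essentially coordinate-free, and only needs a covering, which suffices for IDP via \Cref{lem:IDP-cover}; your argument is more computational but yields the strictly stronger conclusion that $Q$ admits a unimodular \emph{triangulation}, not merely a unimodular covering. (For the purposes of the lemma the face-to-face gluing along $\conv((r,0,0),(0,r,0),v)$ is not even needed — a covering suffices — but your check that both sides refine that triangle by the fan over the points $(k,r-k,0)$ is correct.)
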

\begin{proof}
  The following straightforward argument shows that $Q$ can be covered by lattice polytopes isomorphic to either $\conv((\Delta,1), (\Delta,0))$ or $\conv((\Delta,1),(-\Delta,0))$ as illustrated by \Cref{fig:Cayley}:

  The statement is clear when $r=1$. Let $r \geq 2$. Every dilate $r\Delta$ can be triangulated by translates of $\Delta$ and $-\Delta$. Let $v$ be a point in $Q$ and let $S$ be the \emph{center of similarity} of $\Delta$ and $r\Delta$, i.e., the center of the scaling transformation which in our case is $S=(0,0,r/(r-1)) \in \RR^3$. Let $v'$ be the intersection of the straight line connecting $S$ and $v$ with the hyperplane $\{x_3=0\}$ and let $R$ be a triangle in the triangulation containing $v'$. Then $v$ is contained in $\conv((\Delta,1),(R,0))$.
  
  The polytopes $\conv((\Delta,1),(\Delta,0))$ and $\conv((\Delta,1),(-\Delta,0))$ in turn are easily seen to have a unimodular triangulation since every $3$-dimensional lattice simplex contained in $\conv((\Delta,1),(\Delta,0))$ and $\conv((\Delta,1),(-\Delta,0))$ is unimodular. One can say much more on triangulations of such polytopes e.g.~by the Cayley trick \cite{sturmfels1994newton, Santos}.
\end{proof}

\subsection{Conclusion}
\label{ssec:main}

\begin{proof}[Proof of \Cref{thm:main}]
  By \Cref{lem:IDP-cover}, it suffices to cover $P$ by parallelepipeds and unimodular simplices. Let $v\in P$ be distinct from $0$. Let $v'$ be the intersection of the half ray $\RR_{\ge0}v$ with a facet $F$ of $P$.
  \begin{enumerate}
  \item If $F$ is not a unimodular simplex, then by \Cref{lem:sqare} there exists a unit square $D$ such that $v'\in D\subseteq F$. Hence $v\in \conv(D,-D)$, which is a parallelepiped since it is unimodularly equivalent to the parallelepiped spanned by $(1,0,0), (0,1,0), (2a+1,2b+1,2\ell)$, where $\ell$ is the lattice distance of $D$ from the origin and $a, b$ are two integers. 
  \item If $F$ is a unimodular simplex let $F'$ be as in \Cref{lem:push}. If $v\in \conv(F,F')$ we are done by \Cref{lem:Cayley}. Otherwise, let $\tilde v$ be the intersection of the half ray $\RR_{\ge0}\, v$ with $F'$. We proceed as in point $(1)$ replacing $v'$ by $\tilde v$. \qedhere
  \end{enumerate} 
\end{proof}
\begin{Example}
  Let $C_d=[-1,1]^d \subset \RR^d$ and consider its $n$-th dilate $n C_d$. Then $n C_d$ is a centrally symmetric smooth polytope. By {\em chiseling off} antipodal vertices of $nC_d$ at distance $1$, there appear two unimodular facets and the smoothness is preserved. (See, e.g., \cite{CLNP} for details on chiselings.) Successive chiselings give us various examples of centrally symmetric smooth polytopes containing unimodular facets. 
\end{Example}

\section{Summary}
\label{sec:summary}

We have proved that any centrally symmetric $3$-dimensional smooth polytope $P$ is covered by parallelepipeds and unimodular simplices. It would be desirable to strengthen the statement to show that $P$ admits a unimodular covering. This would follow from a positive answer to one of the following questions.
\begin{Question}
  Do $3$-dimensional parallelepipeds admit a unimodular covering? Do centrally symmetric parallelepipeds of the form $\conv(D,-D)$ where $D$ is a unit square admit a unimodular covering?
\end{Question}

\subsection*{Acknowledgments}
The authors would like to thank the Mathematisches For\-schungs\-in\-sti\-tut Oberwolfach for hosting the Mini-Workshop \emph{Lattice polytopes: methods, advances and applications} in fall 2017 during which this project evolved. We are grateful to Joseph Gubeladze, Bernd Sturmfels, and two anonymous referees for helpful comments. Katharina Jochemko was supported by the Knut and Alice Wallenberg foundation. Lukas Katth\"an was supported by the DFG, grant KA 4128/2-1. Mateusz Micha{\l}ek was supported by the Polish National Science Centre grant no. 2015/19/D/ST1/01180. The work on this paper was completed while the fifth and sixth authors were in residence at the Mathematical Sciences Research Institute in Berkeley, California, during the Fall 2017 semester.

\bibliographystyle{plain}

\end{document}